\documentclass[reqno,draft]{amsart}
\usepackage{amsmath, amsthm, amssymb, dsfont}
\usepackage[a4paper]{geometry}					
\usepackage{graphicx}
\usepackage{tikz}
\usepackage{enumerate}

\usepackage{latexsym}
\usepackage{bbm}
\usepackage{mathtools}
\usetikzlibrary{shapes,snakes}
\usetikzlibrary{positioning}

%
\usepackage{comma}
\usepackage{cutwin}
\usepackage{pstricks}
\usepackage{pst-plot}
\usepackage{pgfplots}
\usepackage{float}

\numberwithin{equation}{section}

\usepackage{changes}

\theoremstyle{plain}
\newtheorem{theorem}{Theorem}[section]
\newtheorem{lemma}[theorem]{Lemma}

\theoremstyle{remark}
\newtheorem{remark}[theorem]{Remark}
\theoremstyle{definition}

\newcommand{\N}{\mathbb{N}}
\newcommand{\Z}{\mathbb{Z}}

\newcommand{\R}{\mathbb{R}}





\newcommand{\B}{\mathcal{B}}

\newcommand{\Prob}{\mathbf{P}}

\newcommand{\E}{\mathbf{E}}

\newcommand{\eqdist}{%
  \mathrel{\vbox{\offinterlineskip\ialign{%
    \hfil##\hfil\cr
    $\scriptscriptstyle\mathrm{d}$\cr
    \noalign{\kern.1ex}
    $=$\cr
}}}}

\newcommand{\1}{\mathbbm{1}}


\newcommand{\distto}{%
  \mathrel{\vbox{\offinterlineskip\ialign{%
    \hfil##\hfil\cr
    $\scriptscriptstyle\mathrm{d}$\cr
    \noalign{\kern-.05ex}
    $\to$\cr
}}}}
\newcommand{\Probto}{%
  \mathrel{\vbox{\offinterlineskip\ialign{%
    \hfil##\hfil\cr
    $\scriptscriptstyle\Prob$\cr
    \noalign{\kern-.05ex}
    $\to$\cr
}}}}
\newcommand{\TVto}{%
  \mathrel{\vbox{\offinterlineskip\ialign{%
    \hfil##\hfil\cr
    $\scriptscriptstyle\mathrm{TV}$\cr
    \noalign{\kern-.05ex}
    $\to$\cr
}}}}






\newcommand{\e}{\mathbf{e}}

\newcommand{\bU}{\boldsymbol U}

\newcommand{\defeq}{\vcentcolon=}
\newcommand{\eqdef}{=\vcentcolon}

\newcommand{\vel}{\overline{\mathrm{v}}}
\newcommand{\pesc}{p_{\mathrm{esc}}}
\newcommand{\lambdacrit}{\mathit{\lambda}_{\mathrm{crit}}}

\begin{document}
\title{Speed function for biased random walks with traps}
\author{Volker Betz \and Matthias Meiners \and Ivana Tomic}

\begin{abstract}
We consider a biased nearest-neighbor random walk on $\Z$
which at each step is trapped for some random time with random, site-dependent mean.
We derive a simple formula for the speed function in terms of the model parameters.
\smallskip

\noindent
{\bf Keywords:} Bouchaud's trap model, random walk in random environment, rate of escape, speed function
\\{\bf Subclass:} MSC: 60K37 $\cdot$ 60F15 
\end{abstract}

\maketitle

\section{Introduction}

Biased random walks on random graphs have been popular objects of study in 
recent years. The most relevant effect is that the random walk can run into traps, i.\,e.\ portions of the random graph that 
can only be exited by making a 
large number of steps in the direction opposite to the drift, which can take a long time. This leads to a decrease of the speed of the 
ballistic motion that is typical for a biased random in the absence of traps. Explicitly, if we embed the random graph into $\R^d$
and define the ballistic speed of the walk 
$(X_t)_{t \geq 0}$ as $\vel = \lim_{t \to \infty} \frac{1}{t} X_t$ (provided the limit exists almost surely),
then in many models $\vel$ (or rather its projection in the direction of the bias)
is not a monotone function of the bias, and often equals zero beyond a critical value of the bias. The latter 
behaviour is known to hold for the biased walk on the infinite connected cluster of supercritical bond percolation on $\Z^d$, $d \geq 2$ 
\cite{Berger+Gantert+Peres:2003,Fribergh+Hammond:2014,Sznitman:2003},
on the Galton-Watson tree with leaves 
\cite{Aidekon:2014,Bowditch+Tokushige:2020,Lyons+Pemantle+Peres:1996}
and on certain one-dimenisional percolation models 
\cite{A-F+H:2009,Gantert+al:2018}. 
We refer to the lecture notes \cite{BenArous+Fribergh:2016} and the introduction of \cite{Gantert+al:2018} for further details on the models 
and relations to physical models. 

The purpose of this note is to derive an explicit formula for the speed in a class of toy models that includes 
the biased Bouchaud's trap model on the integers. In these models, a random walk $(Y_n)_{n \in \N_0}$ runs on $\Z$, 
and the geometry of the random graph is replaced by a family of random average holding times. Explicitly, a random average 
holding time $w_x$ is sampled for each $x \in \Z$, and when the biased random walk hits the point $x$ at time $k$, 
it waits for a random time $\tau_{w_x,x,k}$, mimicking the time it takes to leave a trap with entrance at $x$. The stochastic processes
$(\tau_{r,x,k})_{r \geq 0}$, $x \in \Z$, $k \in \N_0$ are i.\,i.\,d.\ (independent and identically distributed)
and $\tau_{r,x,k}$ has mean $r$, but the $w_x$, $x \in \Z$ are not re-sampled during the dynamics, 
and thus the waiting times  $\tau_{w_{Y_k},Y_k,k}$, $k \in \N_0$ are not independent.
This means that a trivial application of the law of large numbers 
to compute the average waiting time per site, $\lim_{n \to \infty} \frac{1}{n} \sum_{k=0}^{n-1} \tau_{w_{Y_k},Y_k,k}$, 
is not possible. Under the assumption that $(w_x)_{x \in \Z}$ is an ergodic family, we will show that the law of large numbers nevertheless 
holds, and derive an explicit expression for $\vel$ in terms of the waiting time distributions. We illustrate our results with 
several examples that also demonstrate how close our toy model is to various models of biased walks on random graphs.

\section{Model, result and examples}		\label{sec:results}

For $\lambda > 0$, let $\xi, \xi_1, \xi_2,\ldots$ be a sequence of i.\,i.\,d.\ random variables, with
\begin{equation}	\label{eq:xi}
\Prob(\xi = 1) = 1-\Prob(\xi=-1) = \frac{{\rm e}^\lambda}{{\rm e}^\lambda + {\rm e}^{-\lambda}} \eqdef p_\lambda.
\end{equation}
We define $Y_n \defeq \sum_{k=1}^n \xi_k$ for $n \in \N_0 = \{0,1,2,\ldots\}$.
The process $(Y_n)_{n \in \N_0}$ is a biased nearest-neighbor random walk on $\Z$ starting at $0$.

Let $(w_x)_{x \in \Z}$ be an ergodic sequence of positive random variables, modeling the average waiting times at the points $x \in \Z$.
Let $((\tau_{r,x,k})_{r \geq 0})_{x \in \Z, k \in \N_0}$ be a family of independent random processes with $\E[\tau_{r,x,k}] = r$ for all $r \geq 0$
and all $x \in \Z$, $k \in \N$. We assume that $(\tau_{r,x,k})_{r \geq 0}$ for fixed $x \in \Z$ and $k \in \N_0$
is a stochastic process taking values in the Skorohod space $D = D[0,\infty)$ of right-continuous functions with existing left limits at all points in $(0,\infty)$,
see e.\,g.\ \cite[Section 16]{Billingsley:1999}.
Define $(X_t)_{t \geq 0}$ to be the continuous-time process that follows the trajectory of the walk $(Y_n)_{n \in \N_0}$
but following its $k^{\mathrm{th}}$ step to site $x \in \Z$, say, spends the random time $\tau_{w_x,x,k}$ at $x$
before making the next step.
More precisely, let $T_0 \defeq 0$ and
\begin{equation}	\label{eq:T_n}
T_n \defeq \sum_{k=0}^{n-1} \tau_{w_{Y_k}, Y_k,k} 
\end{equation}
for $n \in \N$. Then we define $(X_t)_{t \geq 0}$ via
\begin{equation}	\label{eq:BTM}
X_t \defeq Y_k	\quad	\text{for } T_k \leq t < T_{k+1}.
\end{equation}
The biased Bouchaud's trap model on $\Z$ is the special case where $(w_x)_{x \in \Z}$ is an i.\,i.\,d.\  sequence, and where 
$\tau_{r,x,k} = r \e_{x,k}$, and $(\e_{x,k})_{x \in \Z, \, k \in \N}$ is a family of i.\,i.\,d.\ exponentially distributed random variables with mean $1$;
see \cite{BenArous+Fribergh:2016} for further details on Bouchaud's trap model.
Note that the exponential distribution is necessary (and sufficient) to make $(X_t)_{t \geq 0}$ a continuous time Markov process,
but this property is irrelevant for our purposes.

There is a strong law of large numbers for our model.
To formulate it, we introduce the notation $\pesc = \pesc(\lambda)$ for the escape probability of $(Y_n)_{n \in \N_0}$
from the origin, i.e.,
\begin{align*}
\pesc \defeq \Prob(Y_n \not = 0 \text{ for all } n \in \N) = 2p_\lambda - 1
= \frac{2{\rm e}^\lambda}{{\rm e}^\lambda + {\rm e}^{-\lambda}} -1
= \frac{{\rm e}^{\lambda}-{\rm e}^{-\lambda}}{{\rm e}^\lambda + {\rm e}^{-\lambda}}.
\end{align*}


\begin{theorem}	\label{Thm:speed explicit}
Let $(w_x)_{x \in \Z}$ be ergodic under $\Prob$, and let $\E$ be the expectation with respect to $\Prob$. Then
$\vel := \lim_{t \to \infty} \frac{X_t}{t}$ exists almost surely, and we have 
\begin{equation}	\label{eq:lim X_t/t}
\vel = \frac{\pesc(\lambda)}{\E[w_0]} 
\quad	\text{almost surely.}
\end{equation}
\end{theorem}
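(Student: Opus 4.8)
The plan is to decompose $X_t/t$ into the product of two factors: the number of steps $N_t \defeq \max\{n : T_n \leq t\}$ taken by time $t$, and the displacement-per-step $Y_{N_t}/N_t$. Since $(Y_n)$ is a genuine biased random walk with $\E[\xi] = \pesc(\lambda)$, the ordinary strong law of large numbers gives $Y_n/n \to \pesc(\lambda)$ almost surely, and since $N_t \to \infty$ almost surely (the increments $\tau_{w_{Y_k},Y_k,k}$ are almost surely finite), we get $Y_{N_t}/N_t \to \pesc(\lambda)$. It therefore remains to show $N_t/t \to 1/\E[w_0]$, equivalently $T_n/n \to \E[w_0]$ almost surely, after which writing $X_t/t = (Y_{N_t}/N_t)(N_t/t)$ and a standard sandwiching between $T_{N_t} \leq t < T_{N_t+1}$ finishes the argument.

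The core is thus the claim $\frac{1}{n}\sum_{k=0}^{n-1}\tau_{w_{Y_k},Y_k,k} \to \E[w_0]$ almost surely. First I would condition on the walk $(Y_n)$ and on the environment $(w_x)_{x\in\Z}$. Given these, the summands $\tau_{w_{Y_k},Y_k,k}$ are independent (different $(x,k)$ indices) with conditional means $w_{Y_k}$. So I would split $T_n = \sum_{k=0}^{n-1} w_{Y_k} + \sum_{k=0}^{n-1}(\tau_{w_{Y_k},Y_k,k} - w_{Y_k})$. For the fluctuation term, I would argue that conditionally it is a sum of independent centered variables; to control it I expect to need a mild additional assumption or to exploit that each site is visited only finitely often so the relevant $\tau$'s effectively come from an i.i.d.\ pool — more carefully, because the walk is transient to $+\infty$, for large $n$ most indices $k \leq n$ correspond to distinct sites $Y_k$ with fresh, independent $\tau$ processes. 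Using a Kolmogorov-type strong law (or a truncation plus Borel–Cantelli argument, controlling variances via the finitely-many returns to each site), the normalized fluctuation term should vanish almost surely.

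For the main term $\frac{1}{n}\sum_{k=0}^{n-1} w_{Y_k}$, the key observation is that the local time of $(Y_n)$ at each site, suitably normalized, has a limit: a transient biased walk spends, on average, a geometrically-distributed number of visits at each site it reaches, and by the time it has taken $n$ steps it has essentially swept through the first $\approx \pesc(\lambda)\, n$ sites. Concretely I would write $\sum_{k=0}^{n-1} w_{Y_k} = \sum_{x} \ell_n(x) w_x$ where $\ell_n(x)$ is the number of visits to $x$ up to time $n$, and $\sum_x \ell_n(x) = n$. Since the range of the walk grows like $\pesc(\lambda) n$ and the visit counts $\ell_\infty(x)$ to sites far behind the current position are i.i.d.\ (depending only on the walk increments near $x$) with mean $1/\pesc(\lambda)$, a Birkhoff-type ergodic argument combining the ergodicity of $(w_x)$ with the (independent) ergodic structure of the visit counts yields $\frac{1}{n}\sum_x \ell_n(x) w_x \to \E[w_0]$. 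I would make this rigorous by first handling the case of bounded $w_x$ via the ergodic theorem applied along the range, then removing the boundedness by monotone truncation.

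The main obstacle I anticipate is precisely this last step: the quantities $w_{Y_k}$ are neither independent nor a stationary sequence in $k$ (the walk may backtrack, re-weighting sites already seen), so Birkhoff's theorem does not apply directly in the time variable. The clean route is to change variables from time to space — sum over sites $x$ rather than steps $k$ — and to use that, conditionally on the environment, the visit counts $\ell_\infty(x)$ form (for $x$ in the "already-traversed" region) a sequence that is i.i.d.\ and independent of $(w_x)$, so that $\sum_{x=0}^{m} \ell_\infty(x) w_x / \sum_{x=0}^m \ell_\infty(x) \to \E[w_0]$ by a law of large numbers for the ergodic sequence $(w_x)$ weighted by an independent i.i.d.\ sequence with finite mean; one then transfers this back to the time parametrization using that $Y_{N_t} \to \infty$ and that only $O(1)$ visits to the "frontier" sites are not yet counted. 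Integrability of $w_0$ (which we may assume, since otherwise $\vel = 0$ and the statement is read with $1/\infty = 0$) is what makes the weighted law of large numbers go through.
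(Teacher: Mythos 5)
Your skeleton---reduce to $T_n/n\to\E[w_0]$, then change variables from time to space and sum over sites weighted by visit counts---is the same idea as the paper's proof (which sums the sojourn times $Z_x=\sum_{k=1}^{N_x}\tau_{w_x,x,k}$ over sites and applies Birkhoff). But two steps, as you describe them, do not close. First, the total visit counts $\ell_\infty(x)$ are \emph{not} i.i.d.\ across sites (they are marginally geometric but positively correlated; the edge-crossing numbers form a branching-type Markov chain), and, more seriously, even granting that $(\ell_\infty(x))_x$ is stationary and ergodic and independent of $(w_x)_x$, your ``weighted law of large numbers'' does not follow: the product of two independent ergodic sequences need not be ergodic. (Take $w_x=(-1)^{x+B}$ and $\ell_x=(-1)^{x+B'}$ with $B,B'$ independent fair coins; each sequence is stationary ergodic, but $\tfrac1n\sum_{x\le n} w_x\ell_x=(-1)^{B+B'}$ is a non-degenerate limit.) Since $(w_x)$ is only assumed ergodic, you must use that the visit counts are a factor of an i.i.d.\ \emph{site-indexed} family --- which is exactly why the paper attaches the driving uniforms $U_x(k)$ to (site, visit number) pairs rather than to time, couples from the past to make $x\mapsto N_x$ shift-equivariant, and invokes Lemma 5.6 of \cite{A-F+H:2009} to get joint ergodicity of $(N_x,w_x,(\tau_{\cdot,x,k})_k)_x$. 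This is the key missing ingredient in your argument.

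Second, your fluctuation term $\tfrac1n\sum_{k}(\tau_{w_{Y_k},Y_k,k}-w_{Y_k})$ cannot be handled by a Kolmogorov-type strong law: the summands are (conditionally) independent but not identically distributed, and the model assumes only $\E[\tau_{r,x,k}]=r$, with no second or $p$-th moment control, under which the SLLN for independent non-i.i.d.\ variables genuinely fails. The correct fix is the one you half-suggest and then abandon: do not split off the means at all, but group by site, so that the entire sum over times $k$ with $Y_k=x$ becomes the single stationary observable $Z_x$ with $\E[Z_0]=\E[N_0]\,\E[w_0]$ by Wald's identity, and one application of Birkhoff's theorem (with truncation to cover $\E[w_0]=\infty$, a case you dismiss too quickly) handles mean and fluctuations simultaneously. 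With those two repairs your outline becomes essentially the paper's proof.
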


Formula \eqref{eq:lim X_t/t} has the simple interpretation
that $(Y_n)_{n \in \N_0}$ travels, by the strong law of large numbers, with linear speed $2p_\lambda -1 = \pesc(\lambda)$.
The average duration of each visit of the walk $(X_t)_{t \geq 0}$ to a site $x$ is equal to $\E[w_x] = \E[w_0]$. 
So on average, the walk makes a step every $1/\E[w_0]$ units of time
resulting in a speed of $\pesc(\lambda)/\E[w_0]$.

\subsubsection*{{\bf Example 1:} One-dimensional walk with vertical traps.}

\noindent
Consider the graph obtained by attaching below each point $x$ of $\Z$ (pictured as lying on a horizontal line, called the backbone) 
a finite one-dimensional `branch' (branches are pictured as lying on vertical lines, called traps, see Figure \ref{Fig:Example 1} below)
of random length $L_x \in \N_0$.
Assume that the $L_x$, $x \in \Z$ are i.\,i.\,d. We run a biased 
discrete-time random walk on this graph, with the following transition probabilities: when $x$ is on the backbone and 
the length of the trap is positive, it goes to the right with probability $q p_\lambda$, to the left with probability 
$q (1 - p_\lambda)$, and into the trap with probability $1-q$ for some fixed $q \in (0,1)$. When in a trap, it goes down the trap 
with probability $p_\lambda$ and up the trap with probability $1 - p_\lambda$, unless it is at the very bottom of the trap, 
at which point is goes up with probability one.  \\[1mm]

\begin{figure}[H]
\begin{center}
\begin{tikzpicture} [level distance=8mm] 
 \tikzstyle{every node}=[fill=black, circle,inner sep=1.25pt]
 
\node  at (2,0) {} [grow = right]
child[grow=left, dotted] {node [fill=none] {}}
	child[grow=down, ] {node {}
	child[grow=down, ] {node {}}}
    child [grow=right, draw] {[fill] circle (1.5pt)
   	child[grow=right] {node (i) {}
	 child[grow=down] {node (h) {}}
		child [grow=right] {node (p) {}
		child[grow=down] {node (g) {}
			child[grow=down] {node (j) {}}}
			child [grow=right] { node (l) {}
				child[grow=down] {node {}
	 	child[grow=down] {node {}
		child[grow=down] {node {}}}}
				child [grow=right] {node [draw, fill=black]  (d){}
					child[grow=down] {node {}
					child[grow=down] {node {}}}
					child [grow=right] { node [draw, fill=gray]  (a){}
						child[grow=down, fill=black] {node (b) {}
						child[grow=down] {node {}
						child[grow=down] {node {}
						child[grow=down] {node {}}}}}
						child [grow=right]  {node [draw, fill=black]  (c){}
						child[grow=down] {node {}}
							child [grow=right] {node {}
							child [grow=right] {node {}
								child[grow=down] {node {}
								child[grow=down] {node {}
								child[grow=down] {node {}}}}
								child [grow=right, dotted] {node [fill=none] {}}
								}
							}}}}}}}};
							
\node[fill=none] at (6.1,-0.4)[label=-40: \; \textcolor{gray}{$1-q$} \; ]{};
\node[fill=none] at (6.5, 0.75)[label=-40: \; \textcolor{black}{$x$} \; ]{};
\node[fill=none] at (4.9, 1.2)[label=-40: \; \textcolor{gray}{$q(1-p_{\lambda})$} \; ]{};
\node[fill=none] at (7.2, 0.85)[label=-40: \; \textcolor{gray}{$qp_{\lambda}$} \; ]{};

			
		\draw[->, black,thick,gray] (a) to (b);
		\draw[->, black,thick,gray] (a) to (c);
		\draw[->, black,thick,gray] (a) to (d);	
						
\end{tikzpicture}
\caption{Part of the graph, where the walk is on the backbone at the highlighted vertex $x$.}	\label{Fig:Example 1}
\end{center}
\end{figure}
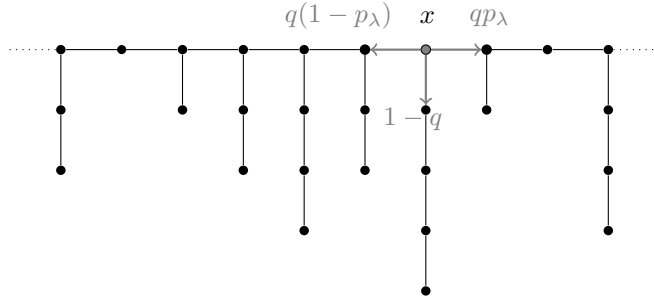

\noindent
This model can be embedded into our model by simply only considering the horizontal coordinate of the walk. 
Then, the walk spends a random amount of time on or below the vertex $x$ of the backbone before moving onto a neighboring vertex, 
the times at different vertices are independent, and have possibly different distributions, and the times spent at consecutive 
visits at the same vertex are also independent, but have the same distribution at every visit. 
All that remains to be done is to calculate $w_x$ and $(\tau_{r,x,k})_{r \geq 0}$.
To this end, assume that we have a trap of length 
$\ell \geq 1$ at some vertex. Let $t_\ell$ be the random time needed by the walk to exit the trap conditional on making one step into it. 
Although the distribution of $t_\ell$ is complicated, its expectation can be computed by electrical network methods, and the result is 
\begin{equation*}
\E[t_\ell] = 2 \frac{{\rm e}^{2 \lambda \ell} - 1}{{\rm e}^{2 \lambda} - 1},
\end{equation*}
see \cite[Eq.\ (1.5)]{Gantert+Klenke:2022}, and note that in their notation, $\beta = {\rm e}^{2\lambda}$.
Since the walk can enter a trap multiple times 
without moving horizontally, the distribution of the total time spent at a trap of depth $\ell$ is equal to the distribution of 
$S_\ell = 1 + \sum_{k=1}^N t_{\ell,k}$, where $N$ is geometrically distributed with success probability $q$
(i.\,e., $\Prob(N=n) = (1-q)^n q$, $n \in \N_0$)
and where the $t_{\ell,k}$, $k \in \N$ are independent copies of $t_\ell$.
It follows that the expected time spent at a site with a trap of length 
$\ell$ is equal to 
\begin{equation*}
\bar S_\ell \defeq \E[S_\ell]
= 1 + \frac{1-q}{q} \E[t_\ell]
= 1 +  \frac{2(1-q)}{q} \frac{{\rm e}^{2 \lambda \ell} - 1}{{\rm e}^{2 \lambda} - 1}.
\end{equation*}
Therefore, with $S_\ell = 1$ if $\ell=0$, we must choose $w_x$ as i.\,i.\,d.\  random variables with 
\begin{equation}
\label{eq:def w_x}
\Prob(w_x = \bar S_\ell) = \Prob(L = \ell),
\end{equation}
and $\tau_{\bar S_\ell,x,k} = S_{\ell,x,k}$, where $S_{\ell, x, k}$ are independent copies of $S_{\ell}$. 
We extend $\tau_{r,x,k}$ to $r \notin \{\bar S_\ell: \ell \in \N_0\}$ piecewise linear to make $(\tau_{r,x,k})_{r \geq 0}$ (right-) continuous
and to ensure $\E[\tau_{r,x,k}]=r$ for all $r \geq 0$.
(Actually, we can choose the $t_{\ell,k}$ non-decreasing in $\ell$ and hence $(\tau_{r,x,k})_{r \geq 0}$ non-decreasing in $r$,
but this is not required here.)
We then get 
\begin{equation*}
\vel = \frac{\pesc(\lambda)}{\E[w_0]} = \frac{\pesc(\lambda)}{\sum_{\ell=0}^\infty \bar S_\ell \Prob(L=\ell)} =
 \frac{{\rm e}^\lambda - {\rm e}^{-\lambda}}{{\rm e}^\lambda + {\rm e}^{-\lambda}} 
\frac{1}{1 + \frac{2(1-q)}{q} \frac{\E[{\rm e}^{2 \lambda L}] - 1}{{\rm e}^{2\lambda}-1}}.
\end{equation*}
We observe that the distribution of the trap length $L$ plays a crucial role.
If $L$ has all exponential moments, then 
the speed is strictly positive for all $\lambda > 0$, while in the case where $L$ has no exponential moments,
the speed is always 
equal to zero. The phase transition from positive to zero speed occurs if $L$ has some but not all exponential moments.
This is the case,
for instance, if $L$ is geometrically distributed, which seems to be the case
(at least approximately) in most of the important models -- see Example 3 below and 
the discussion following it.
We set $\Prob(L = \ell) = (1 - {\rm e}^{-\alpha}) {\rm e}^{-\alpha \ell}$, $\ell \in \N_0$.
In \cite{Gantert+Klenke:2022}, 
precise tail estimates for the random variable $\sum_{k=0}^L t_{\ell,k}$ are obtained for this choice of $L$, 
but for our purposes, simple calculations suffice. 
We obtain 
\begin{equation*}
\E[{\rm e}^{2 \lambda L}] -1
= \frac{1 - {\rm e}^{-\alpha}}{1 - {\rm e}^{2 \lambda - \alpha}} -1
= \frac{{\rm e}^{2\lambda}-1}{{\rm e}^\alpha - {\rm e}^{2\lambda}} \qquad \text{if } \lambda < \alpha/2,
\end{equation*}
and equal to infinity otherwise.  We conclude that for fixed $\alpha >0$ and $0 < \lambda < \alpha/2$, 
\begin{equation*}
\vel(\lambda) =  \frac{{\rm e}^\lambda - {\rm e}^{-\lambda}}{{\rm e}^\lambda + {\rm e}^{-\lambda}} 
\frac{{\rm e}^\alpha - {\rm e}^{2 \lambda}}{{\rm e}^\alpha - {\rm e}^{2 \lambda} + \frac{2(1-q)}{q}},
\end{equation*}
whereas $\vel = 0$ if $\lambda \geq \alpha/2$.

\subsubsection*{{\bf Example 2:} Bouchaud's trap model with drift-dependent holding times.}

We can reduce the previous example to Bouchaud's trap model by replacing the precise distribution of 
$S_\ell = 1 + \sum_{k=0}^L t_{\ell,k}$ with 
a general heavy-tailed distribution for $w_0$, and by setting $\tau_{r,x,k} = r \e_{x,k}$ with $\e_{x,k}$ exponentially distributed with mean $1$.
The main features are the same: inspired by the known tail behaviour of $S_\ell$ as a function of the bias
\cite{Gantert+Klenke:2022,Luebbers+Meiners:2019}, we set 
\begin{equation}	\label{eq:tau}
\Prob(w_0 \geq t) = t^{-\alpha}	\quad	\text{for } t \geq 1,
\end{equation}
for some $\alpha > 0$.
We obtain
\begin{equation*}
\vel(\alpha,\lambda) = \frac{e^\lambda-e^{-\lambda}}{e^\lambda+e^{-\lambda}} \frac{(\alpha-1)^+}{\alpha}.
\end{equation*}
We choose $\alpha = \lambdacrit/\lambda$ where $\lambdacrit>0$ is a parameter,
and write $\vel(\lambda)$ for $\vel(\alpha,\lambda)$ if \eqref{eq:tau} holds with $\alpha=\lambdacrit/\lambda$.
In particular, if $\alpha = \lambdacrit/\lambda$,
\begin{equation*}
\vel(\lambda) = \frac{e^\lambda-e^{-\lambda}}{e^\lambda+e^{-\lambda}} \frac{(\lambdacrit-\lambda)^+}{\lambdacrit}.
\end{equation*}

\begin{figure}[h]
\begin{center}
	\begin{tikzpicture}[scale=1]
		\draw [help lines] (-0.2,-0.2) grid (5.2,2.2); 
		\draw (0,2)	node[left] {$\tfrac12$};
		\draw (4,0)	node[below] {$1$};
		\draw [->] (-0.2,0) -- (5.2,0); 
		\draw [->] (0,-0.2) -- (0,2.2);
		\draw [thick, domain=0:4, samples=100] plot (\x, {4*(exp(\x/4)-exp(-\x/4))/(exp(\x/4)+exp(-\x/4))*(1-\x/4))});
		\draw [thick, domain=4:5.2, samples=100] plot (\x, {0});
	\end{tikzpicture}
	\caption{The function $\lambda \mapsto \vel(\lambda)$ with $\lambdacrit \defeq 1$.}
\end{center}
\end{figure}

\noindent
\subsubsection*{{\bf Example 3:} One-dimensional percolation with horizontal traps.}
As in Example 1, we consider a random graph with a backbone that is just $\Z$, pictured as lying on a horizontal line, 
and we attach traps at each point of the backbone. The difference now is while the first vertex of a trap is still 
below the vertex of $\Z$ at which the trap is attached, all further vertices of the trap are to the right of that first 
vertex, below some other vertices of the backbone. We demand that no other traps of positive length can occur for 
$\ell-1$ steps to the right of a trap of length $\ell$, meaning that we can represent the random graph as a subgraph of 
$\Z \times \{0,1\}$ (with nearest-neighbor edges).

\begin{figure}[H]
\begin{center}
\begin{tikzpicture} [level distance=8mm] 
 \tikzstyle{every node}=[fill=black, circle,inner sep=1.25pt]

\node  at (2,0) {} [grow = right]
child[grow=left, dotted] {node [fill=none] {}}
	child[grow=down] {node (j) {}
	child [grow = right] {node (p) {}
	child [grow = right] {node (r) {}
	child [grow = right] {node (s) {}}
	}}}
	child[grow=right] {node (a) {}
	child[grow=right] {node (b) {}
	child[grow=right] {node (c) {}
	child[grow=right] {node (d) {}
	child[grow=down] {node (j) {}
	child [grow = right] {node (t) {}}}
	child[grow=right] {node (e) {}
	child[grow=right] {node (f) {}
	child [grow = down] {node (t) {}}
	child[grow=right] {node (g) {}
	child[grow=down] {node (j) {}
	child [grow = right] {node (p) {}
	child [grow = right] {node (r) {}}}}
	child[grow=right] {node (h) {}
	child[grow=right] {node (i) {}
	child[grow=right, dotted] {node [fill=none] {}}
	}}}}}}}}};

												
\end{tikzpicture}
\caption{Part of the graph.}	\label{Fig:Example 3}
\end{center}
\end{figure}
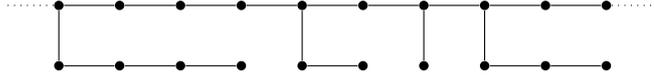

\noindent
In fact, this is the model studied in 
\cite{A-F+H:2009,Gantert+al:2018} with traps only or single edges to the right. 
If we assume that edges not in the backbone appear independently with probability ${\rm e}^{-\alpha}$, then conditionally on having 
only a backbone and traps, the trap length $L$ is geometric with success probability $1-{\rm e}^{-\alpha}$. 
Inside a trap, we assume the same dynamics as in Example 1, 
i.e.\ a bias towards the end of the trap
(which is now to the right, so the bias has the same direction and strength as in the backbone). 
The only point where greater generality than given in Example 1 might be desirable is the probability of the final jump out of the 
trap, which is in the vertical direction, and one might want to assign it a different probability than 
${\rm e}^{-\lambda}/({\rm e}^{\lambda}+{\rm e}^{-\lambda})$.
We do not do this for the sake of not further complicating affairs. 

The model described above can again be mapped exactly into the context of Theorem \ref{Thm:speed explicit}. To do this, consider a 
Markov chain $(Z_x)_{x \in \Z}$ with values in $\N_0$ and with transition probabilities 
\begin{equation*}
p(i,j) = \begin{cases} 
\Prob(L=j) 	& \text{if } i \in \{0,1\}, \\
1 			& \text{if } i>1,\;j=i-1,\\
0 			& \text{otherwise.}
\end{cases}
\end{equation*}
The interpretation of this chain is that when $Z_x \geq 1$, then there is a trap below $x$ (possibly beginning to the left of $x$)
with $Z_x$ remaining vertices (including the vertex below $x$).
Traps of length $1$ do not obstruct further traps, but traps of length $\ell \geq 2$ prevent further traps for $\ell-1$ steps.
Therefore, if we define 
$f:\N_0^2 \to \N_0$ with 
\begin{equation*}
f(i,j) = \begin{cases} 
j & \text{if } j \geq i,\\
0 & \text{if } j < i,
\end{cases}
\end{equation*}
then $\Lambda_x = f(Z_{x-1},Z_x)$ models the length of the trap rooted at site $x$. 

The Markov chain $(Z_x)_{x \in \Z}$ has a unique invariant measure
if and only if $\E[L]<\infty$: from the transition probabilities, we conclude that 
the weight 
function $\pi$ of an invariant measure must fulfil the equations
$\pi(0) \Prob(L=0)  + \pi(1) \Prob(L=0) = \pi(0)$, and 
\begin{equation*}
\pi(0) \Prob(L=j) + \pi(1) \Prob(L=j) + \pi(j+1) = \pi(j) \qquad \text{for } j \in \N.
\end{equation*}
The unique probability weight function solving these equations is given by 
\begin{equation*}
\pi(0) = \frac{\Prob(L=0)}{\Prob(L=0) + \E[L]}, \quad \pi(j) = \frac{\Prob(L \geq j)}{\Prob(L=0) + \E[L]} \quad	\text{ for } j \in \N.
\end{equation*}
The stationary Markov chain $(Z_x)_{x \in \Z}$ is thus ergodic (see e.g.\ Theorem 5.2.6 of \cite{Douc+al:2018}),
and by \cite[Lemma 5.6(c)]{A-F+H:2009} and the representation $\Lambda_x = f(Z_{x-1},Z_x)$, also 
the process $(\Lambda_x)_{x \in \Z}$ is ergodic. Defining $\bar S_\ell$ as in Example 1 and setting  
$w_x = \bar S_{\Lambda_x}$, we see (e.g.\ again by \cite[Lemma 5.6(c)]{A-F+H:2009}) that also 
$(w_x)_{x \in \Z}$ is ergodic. 
After defining $\tau_{\bar S_\ell, x,k} = S_{\ell,x,k}$ in the same way as in Example 1, we are back in the setting of 
Theorem \ref{Thm:speed explicit}. It remains to compute $\E[w_0]$. We have 
\begin{equation*}
\Prob(\Lambda_0 = j) = \sum_{k=0}^\infty \Prob(f(Z_{-1},Z_0) = j \, | \, Z_{-1} = k) \pi(k) = \Prob(L = j) (\pi(0) + \pi(1)) + 
\delta_{j,0} (1 - \pi(0) - \pi(1)).
\end{equation*}
Since $\pi(0) + \pi(1) = \frac{1}{\Prob(L=0) + \E[L]}$ and $\bar S_0 = 1$, we obtain 
\begin{equation*}
\E[w_0] = \sum_{j = 0}^\infty \bar S_j \Prob(\Lambda_0 = j) = \frac{\E[\bar S_L]}{\Prob(L=0) + \E[L]} + 
\bar S_0 (1 - \tfrac{1}{\Prob(L=0) + \E[L]}) = 1 + \frac{\E[\bar S_L]-1}{\Prob(L=0) + \E[L]}. 
\end{equation*}
Assuming again that $L$ is geometric with success probability $1-{\rm e}^{-\alpha}$, and $\lambda < \alpha/2$, we know from Example 1 that 
\begin{equation*}
\E[\bar S_L] = 1 + \frac{2(1-q)}{q} \frac{1}{{\rm e}^{\alpha} - {\rm e}^{2\lambda}},
\end{equation*}
and we calculate $\Prob(L=0) + \E[L] = 1 + \frac{{\rm e}^{-2\alpha}}{1 - {\rm e}^{-\alpha}}$.
This gives 
\begin{equation*}
\E[w_0] = 1 + \frac{2(1-q)}{q} \frac{1}{({\rm e}^{\alpha} - {\rm e}^{2\lambda}) (1 + \frac{{\rm e}^{-2\alpha}}{1 - {\rm e}^{-\alpha}})},
\end{equation*}
and finally we obtain an  explicit formula for
\begin{equation*}
\vel(\lambda) =   \frac{{\rm e}^\lambda - {\rm e}^{-\lambda}}{{\rm e}^\lambda + {\rm e}^{-\lambda}}  \frac{1}{\E[w_0]}.
\end{equation*}

\begin{remark}
We have seen in Examples 1 and 3 that the tail behaviour of the 
geometric distribution of the trap length is crucial for the slowdown to zero of the speed for a finite bias. Proposition 1.1
of \cite{Fribergh+Hammond:2014} shows that this tail is also present in the case of the biased walk on a percolation cluster, 
although in this case a direct mapping to our reduced model is probably not possible. For the full model of one-dimensional percolation 
on the ladder graph \cite{A-F+H:2009,Gantert+al:2018}, besides the traps that we treat in Example 3 there are areas that are not traps, 
i.\,e.\ where it is possible to go to the next trap entrance to the right without making a step to the left. In such areas, the speed of the biased 
random walk should be non-decreasing as a function of the bias. For an explicit formula for the speed in the ladder graph model, one would need 
to find the probabilities of all possible non-trap areas along with the average time it takes to travel through them, then combine this 
with the considerations of Example 3. Using some of the considerations in \cite{A-F+H:2009}, this seems possible, but will be messy, 
so we do not do it. We anyway expect that the slowdown behavior will be of the same type, since it is dominated by the trap regions.
\end{remark}

\section{Proof of Theorem \ref{Thm:speed explicit}}

While Theorem \ref{Thm:speed explicit} is highly plausible,
a rigorous proof is required.
We use a coupling-from-the-past argument adapted from \cite{A-F+H:2009},
in which a formula,  not tractable for explicit calculations, for the speed of biased random walk
in a one-dimensional percolation environment is derived.

On a suitable probability space, consider independent families of random variables 
$(w_x)_{x \in \Z}$,  $((\tau_{r,x,k})_{r \geq 0})_{x \in \Z, k \in \N_0}$ and $(U_x(k))_{x \in \Z,\,k \in \N}$ where $U_x(k)$ is uniform on $(0,1)$, 
$(w_x)_{x \in \Z}$ is ergodic, the processes $(\tau_{r,x,k})_{r \geq 0}$, $x \in \Z, k \in \N_0$ are i.\,i.\,d.\ $r,x,k$ and $\E[\tau_{r,x,k}] = r$ for all $r \geq 0$.
We note already here that 
by \cite[Lemma 5.6(b)]{A-F+H:2009}, the family 
\begin{equation*}
(\boldsymbol V_x)_{x \in \Z} \defeq ((U_x(k))_{k \in \N}, w_x, ((\tau_{r,x,k})_{r \geq 0})_{k \in \N})_{x \in \Z}
\end{equation*}
is ergodic. Let us write $E \defeq (0,1)^\N \times \R^+ \times D^\N$ for the target space of the $\boldsymbol V_{\!x}$.
By \cite[Lemma 5.6(b)]{A-F+H:2009}, 
for any measurable function $f: E^{\N} \times E^{\N} \to \R$, the sequence 
\begin{equation*}
(f(\boldsymbol V_{\!x}, \boldsymbol V_{\!x+1}, \boldsymbol V_{\!x+2}, \ldots; \boldsymbol V_{\!x-1}, \boldsymbol V_{\!x-2}, \ldots))_{x \in \Z}
\end{equation*} 
is ergodic. We will make extensive use of this fact.   

First, for any $x \in \Z$, we may define $(Y_n^x)_{n \in \N_0}$
as the discrete-time random walk started at $x$ that, when hitting a state $y \in \Z$
for the $k^{\mathrm{th}}$ time, makes the next step from $y$ to $y-1$ if
\begin{equation*}
U_{y}(k) \leq \frac{e^{-\lambda}}{e^\lambda + e^{-\lambda}},
\end{equation*}
and steps to $y+1$, otherwise.
A point $x \in \Z$ is called a \emph{regeneration point} if $Y_n^x > x$ for all $n \in \N$.
Write $I_x \defeq \1_{\{Y_n^x > x \text{ for all } n \in \N\}}$ for the indicator of the event that $x$ is a regeneration point.

\begin{lemma}	\label{Lem:I_x}
The sequence $(I_x)_{x \in \Z}$ is ergodic and
\begin{align}	\label{eq:Birkhoff I_x}
\lim_{n \to \infty} \frac1n \sum_{x=0}^{n-1} I_x = r_\lambda	\quad	\text{almost surely}
\end{align}
for $r_\lambda \defeq \Prob(I_0) > 0$.
\end{lemma}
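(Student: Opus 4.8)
The plan is to deduce ergodicity of $(I_x)_{x \in \Z}$ from the ergodicity of $(\boldsymbol V_{\!x})_{x \in \Z}$ via the representation-as-factor principle already quoted from \cite[Lemma 5.6(b)]{A-F+H:2009}. The key observation is that whether $x$ is a regeneration point — i.e.\ whether the walk $(Y_n^x)_{n \in \N_0}$ started at $x$ and driven by the uniforms $(U_y(k))$ ever returns to $x$ — depends only on the uniforms attached to sites $y \geq x$: starting at $x$, as long as the walk stays strictly to the right of $x$ it only ever consults $U_y(k)$ for $y > x$, and the very first step uses $U_x(1)$; if that first step is to $x+1$, the event $\{Y_n^x > x \text{ for all } n\}$ is determined by the uniforms at sites $\geq x+1$, while if the first step is to $x-1$ the event fails. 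Hence $I_x = g(\boldsymbol V_{\!x}, \boldsymbol V_{\!x+1}, \boldsymbol V_{\!x+2}, \ldots)$ for a fixed measurable $g : E^{\N} \to \{0,1\}$ that does not depend on $x$. Applying the quoted fact with $f$ depending only on the forward coordinates then shows $(I_x)_{x \in \Z}$ is ergodic (in particular stationary).

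Next I would record that $r_\lambda \defeq \Prob(I_0) = \Prob(Y_n^0 > 0 \text{ for all } n \in \N)$. On the event that the first step is to $+1$ (probability $p_\lambda$), the walk from $1$ is, in distribution, the biased walk $Y$ shifted by $1$, and $\{Y_n^0 > 0 \ \forall n\}$ on this event is precisely the event that this shifted walk never hits $0$, which has probability $1$ conditional on the first step being up only in the degenerate sense — more carefully, by a standard one-dimensional gambler's-ruin / first-step computation one gets $r_\lambda = p_\lambda \cdot \Prob_1(\text{never hit }0) = p_\lambda \cdot \frac{p_\lambda - (1-p_\lambda)}{p_\lambda} = 2p_\lambda - 1 = \pesc(\lambda)$; in any case the only thing truly needed here is $r_\lambda > 0$, which holds because $\lambda > 0$ forces $p_\lambda > 1/2$ and the biased walk is transient to $+\infty$, so the event of a single upward step followed by never returning has positive probability. (One may simply cite this as the classical escape probability of the biased walk.)

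Finally, with ergodicity and stationarity of $(I_x)_{x \in \Z}$ in hand, Birkhoff's pointwise ergodic theorem applied to the bounded function $I_0$ yields $\frac1n \sum_{x=0}^{n-1} I_x \to \E[I_0] = \Prob(I_0) = r_\lambda$ almost surely, which is exactly \eqref{eq:Birkhoff I_x}. The one point requiring a little care — the main (if modest) obstacle — is the bookkeeping in the factor representation: one must check that the dependence of $I_x$ on the environment genuinely involves only the uniforms at sites $\geq x$ (and not, say, $U_x(k)$ for $k \geq 2$, which would matter only if the walk returned to $x$, i.e.\ precisely on the complement of $I_x$), and that the map $g$ is measurable with respect to the product $\sigma$-algebra on $E^{\N}$; both are routine once one writes $I_x$ as a countable combination of events of the form $\{U_y(k) \le \ldots\}$. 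The positivity $r_\lambda > 0$ is then immediate from the transience of the biased walk, and no delicate estimate is needed anywhere.
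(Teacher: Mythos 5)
Your proposal is correct and follows essentially the same route as the paper: represent $I_x$ as a fixed measurable function of the uniforms attached to sites $\geq x$, invoke the ergodicity-of-factors lemma from \cite{A-F+H:2009} to get ergodicity of $(I_x)_{x\in\Z}$, obtain $r_\lambda>0$ from transience to the right, and conclude via Birkhoff. The only (harmless) additions are your explicit evaluation $r_\lambda = 2p_\lambda-1 = \pesc(\lambda)$, which the paper does not need, and your use of the full environment variables $\boldsymbol V_{\!x}$ where the uniforms $\bU_{\!x}$ alone suffice.
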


We write $\bU_{\!x}$ for $(U_x(k))_{k \in \N}$.

\begin{proof}
Notice that  $I_x = f(\bU_{\!x},\bU_{\!x+1},\ldots;\bU_{\!x-1},\bU_{\!x-2},\ldots)$ for some Borel measurable function
$f: ((0,1)^\N)^\Z \to [0,1]$ (actually, $f$ only depends on the variables $\bU_{\!x},\bU_{\!x+1},\ldots$). 
Then, since the family $(\bU_{\!y})_{y \in \Z}$ is ergodic
as a family of i.i.d.\ random variables on $(0,1)^\N$,
we conclude that $(I_x)_{x \in \Z}$ is also ergodic by \cite[Lemma 5.6(c)]{A-F+H:2009}.

Since $\lambda>0$, each of the walks $(Y_n^x)_{n \in \N_0}$ is transient to the right
and, thus, $r_\lambda = \Prob(I_0) > 0$.
Now \eqref{eq:Birkhoff I_x} follows from Birkhoff's ergodic theorem.
\end{proof}

By \eqref{eq:Birkhoff I_x}, $\Prob(I_x = 1 \text{ for infinitely many } x \in \N_0) = 1$
and hence, by shift invariance and with some effort, we conclude $\Prob(I_x = 1 \text{ for infinitely many } x < 0) = 1$.
We enumerate the random points $x \in \Z$ with $I_x=1$ from left to right
with $\ldots,R_{-1}, R_0, R_1, R_2,\ldots$ such that $R_k < R_{k+1}$ for all $k \in \Z$
and $R_{-1} < 0 \leq R_0 < R_1$.
Now notice that for any $x,y \in \Z$, the trajectories of $(Y_n^x)_{n \in \N_0}$ and
$(Y_n^y)_{n \in \N_0}$ coincide once they hit the first $R_k \geq x \vee y$.
For $k \in \Z$, define $\rho_k \defeq \inf\{n \in \N_0: Y_n^{R_k} = 0\}$ to be the first time
at which the random walk started at $R_k$ hits $0$. By the transience to the right,
$\rho_k < \infty$ for all $k<0$.
Also, at time $\rho_{k-1}-\rho_{k}$ the walk started at $R_{k-1}$
hits $R_{k}$ and its trajectory then coincides with the walk started at $R_{k}$ for $k<0$,
i.e.,
\begin{equation}	\label{eq:no conflict}
(Y^{R_{k-1}}_{\rho_{k-1}-\rho_{k}}, Y^{R_{k-1}}_{\rho_{k-1}-\rho_{k}+1},\ldots)
= (Y^{R_{k}}_0, Y^{R_{k}}_1,\ldots).
\end{equation}
Consequently, we may set
\begin{align*}
(Y_{-\rho_k}^{-\infty},Y_{-\rho_k+1}^{-\infty},\ldots) \defeq (Y_0^{R_{k}},Y_1^{R_{k}},\ldots)
\end{align*}
for $k<0$. As $\rho_k \to \infty$ almost surely as $k \to -\infty$,
this defines $Y_n^{-\infty}$ for all $n \in \Z$ almost surely,
as we may choose $k < 0$ (randomly) such that $-\rho_k < n$.
Notice that when $-\rho_k < n$, then also $-\rho_{k-1} < n$
and $Y_n^{-\infty}$ is defined twice (actually infinitely often).
Then  \eqref{eq:no conflict} guarantees that the definitions coincide, namely,
\begin{equation*}
Y^{R_{k-1}}_{\rho_{k-1}+n} = Y^{R_{k-1}}_{\rho_{k-1}-\rho_k+ \rho_k + n} = Y^{R_k}_{\rho_k+n}.
\end{equation*}
We may assume without loss of generality that $(Y_n)_{n \in \N_0} = (Y_n^0)_{n \in \N_0}$.
We then define $(X_t)_{t \geq 0}$ as the continuous-time process (with right-continuous paths)
starting at the origin at time $0$
that follows the trajectory of $(Y_n)_{n \in \N_0}$ but stays at $x \in \Z$ upon its $k^{\mathrm{th}}$ visit to this point
for time $\tau_{w_x,x,k}$. (This process has the same law as the process defined in Section \ref{sec:results}.)
Similarly, we define $(X_t^{-\infty})_{t \geq 0}$ as the continuous-time process (with right-continuous paths)
that follows the trajectory of $(Y_n^{-\infty})_{n \in \N_0}$,
hits the origin for the first time at time $0$ and stays at $x \in \Z$ upon its $k^{\mathrm{th}}$ visit to this point
for time $\tau_{w_x,x,k}$.

For $x \in \Z$, we define
\begin{equation*}
N_x \defeq \sum_{n \in \Z} \1_{\{Y_n^{-\infty} = x\}}
\quad	\text{and}	\quad
Z_x \defeq \sum_{k=1}^{N_x} \tau_{w_x,x,k} 
\end{equation*}
to be the number of visits of the walk $(Y_n^{-\infty})_{n \in \Z}$ to $x$ and the time the continuous-time random walk
$(X_t^{-\infty})_{t \in \R}$ spends at $x$, respectively.

\begin{lemma}	\label{Lem:N_x+Z_x}
The families $(N_x)_{x \in \Z}$ and $(Z_x)_{x \in \Z}$ are ergodic.
In particular, almost surely as $n \to \infty$,
\begin{align}	\label{eq:Birkhoff N_x+Z_x}
\frac1n \sum_{x=0}^{n-1} N_x \to \E[N_0] = \frac{e^\lambda+e^{-\lambda}}{e^\lambda-e^{-\lambda}}
\quad	\text{and}	\quad
\frac1n \sum_{x=0}^{n-1} Z_x \to \E[Z_0] = \frac{e^\lambda+e^{-\lambda}}{e^\lambda-e^{-\lambda}} \E[w_0].
\end{align}
\end{lemma}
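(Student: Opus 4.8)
\emph{The plan.} I would prove the three assertions in turn: ergodicity of $(N_x)_{x\in\Z}$ and of $(Z_x)_{x\in\Z}$, the value $\E[N_0]=\tfrac{e^\lambda+e^{-\lambda}}{e^\lambda-e^{-\lambda}}$, and the identity $\E[Z_0]=\E[N_0]\,\E[w_0]$. Once ergodicity is in hand, both convergences in \eqref{eq:Birkhoff N_x+Z_x} are Birkhoff's ergodic theorem applied to the nonnegative stationary ergodic sequences $(N_x)_x$ and $(Z_x)_x$, with the usual convention that the Birkhoff average converges to the mean in $[0,\infty]$ also when that mean is infinite (the degenerate case $\E[w_0]=\infty$).

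\emph{Ergodicity.} The coupling-from-the-past construction is translation covariant: shifting the environment $(\boldsymbol V_y)_{y\in\Z}$ by one site moves every regeneration point $R_k$ by one and rigidly translates the bi-infinite trajectory $(Y_n^{-\infty})_{n\in\Z}$ in space. Hence $N_x$ is, off a $\Prob$-null set, one fixed Borel functional evaluated at the shifted environment; concretely $N_x=g(\bU_x,\bU_{x+1},\ldots;\bU_{x-1},\bU_{x-2},\ldots)$ for a single measurable $g$ (well defined as a $\Prob$-a.s.\ limit of measurable functions of the $\bU$'s, since $(Y_n^{-\infty})_{n\in\Z}$ itself is built by such a limit). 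Likewise $Z_x=\sum_{k=1}^{N_x}\tau_{w_x,x,k}$ is one fixed translation-covariant Borel functional of $(\boldsymbol V_y)_{y\in\Z}$, because $N_x$ depends only on the $\bU$-coordinates while $w_x$ and $(\tau_{r,x,k})_{r\geq0}$ are coordinates of $\boldsymbol V_x$. Ergodicity of $(N_x)_x$ and of $(Z_x)_x$ then follows from the ergodicity of $(\boldsymbol V_x)_{x\in\Z}$ via parts (b) and (c) of \cite[Lemma 5.6]{A-F+H:2009}.

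\emph{The mean $\E[N_0]$.} I would exploit that $(Y_n^{-\infty})_{n\in\Z}$ is a nearest-neighbour walk, transient to $+\infty$ in forward time, and that, by the construction, $Y_n^{-\infty}<0$ for all $n<0$ while time $0$ is the \emph{first} visit of the trajectory to the origin. Therefore
$\sum_{x=0}^{n-1}N_x=\#\{m\in\Z:Y_m^{-\infty}\in\{0,\ldots,n-1\}\}$
is sandwiched, for every $n\geq 1$, between $\sigma_n-C$ and $\tau_n+1$, where $\sigma_n=\inf\{m\geq0:Y_m^{-\infty}\geq n\}$, $\tau_n=\sup\{m:Y_m^{-\infty}\leq n-1\}$, and $C=\#\{m\geq0:Y_m^{-\infty}<0\}$ is the (a.s.\ finite, $n$-independent) number of forward steps spent strictly below the origin. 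Since the forward trajectory agrees, after the finite time at which it first reaches $0$, with the biased walk, the strong law gives $Y_m^{-\infty}/m\to\pesc(\lambda)$, whence $Y_{\sigma_n}^{-\infty}=n$ and $Y_{\tau_n}^{-\infty}=n-1$ force $\sigma_n/n\to 1/\pesc(\lambda)$ and $\tau_n/n\to 1/\pesc(\lambda)$. The squeeze yields $\tfrac1n\sum_{x=0}^{n-1}N_x\to 1/\pesc(\lambda)$ a.s., and comparing with Birkhoff's theorem identifies $\E[N_0]=1/\pesc(\lambda)=\tfrac{e^\lambda+e^{-\lambda}}{e^\lambda-e^{-\lambda}}$. (Consistently, $N_0$ has the law of the number of visits of $(Y_n)_{n\in\N_0}$ to its starting point, which is geometric with success probability $\pesc(\lambda)$ by a gambler's-ruin computation.)

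\emph{The mean $\E[Z_0]$, and the main obstacle.} Conditioning on $\mathcal G=\sigma\big((\bU_y)_{y\in\Z},(w_y)_{y\in\Z}\big)$, which determines $N_0$ and $w_0$, and using that the family $((\tau_{r,x,k})_{r\geq0})$ is independent of $\mathcal G$ with $\E[\tau_{r,x,k}]=r$, one gets $\E[Z_0\mid\mathcal G]=\sum_{k=1}^{N_0}\E[\tau_{w_0,0,k}\mid\mathcal G]=N_0\,w_0$; since $N_0$ is $\sigma((\bU_y)_y)$-measurable and $w_0$ is $\sigma((w_y)_y)$-measurable, these are independent, so $\E[Z_0]=\E[N_0]\,\E[w_0]=\tfrac{e^\lambda+e^{-\lambda}}{e^\lambda-e^{-\lambda}}\,\E[w_0]$. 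The step I expect to require the most care is the evaluation of $\E[N_0]$: one must keep careful track of the fact that time $0$ for $(Y_n^{-\infty})_n$ is the distinguished first visit to the origin produced by the coupling, verify that the forward trajectory obeys the strong law with the correct constant $\pesc(\lambda)$, and control the boundary quantities $\sigma_n,\tau_n$ and the below-zero time $C$ uniformly in $n$; the ergodicity argument, by contrast, is routine once the translation covariance of the construction is spelled out.
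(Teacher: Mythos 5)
Your proposal is correct and follows the same overall architecture as the paper's proof: ergodicity of $(N_x)_x$ and $(Z_x)_x$ as fixed translation-covariant functionals of the environment via \cite[Lemma 5.6]{A-F+H:2009}, then Birkhoff's theorem (with the $\E[w_0]=\infty$ case handled by the usual extension), then the two mean computations. Your treatment of $\E[Z_0]$ by conditioning on $\sigma((\bU_y)_y,(w_y)_y)$ is just Wald's identity spelled out, which is exactly what the paper does. The one place you genuinely diverge is $\E[N_0]$: the paper observes directly that, since time $0$ is the first visit of $(Y_n^{-\infty})$ to the origin, $N_0$ is geometric with success probability $\pesc$ and sums the tail probabilities, a two-line computation. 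You instead sandwich the occupation time $\sum_{x=0}^{n-1}N_x$ between hitting times $\sigma_n$ and $\tau_n$, invoke the strong law $Y^{-\infty}_m/m\to\pesc$, and then identify $\E[N_0]=1/\pesc$ by matching against the Birkhoff limit. This route is valid (your sandwich bounds and the finiteness of the below-zero occupation time $C$ are correct), and it has the mild virtue of not requiring the distribution of $N_0$, only the speed of the discrete walk; but it is heavier machinery than needed, and you in fact concede the direct geometric computation in your parenthetical remark. Either way the lemma is proved.
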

\begin{proof}
The limiting relations in \eqref{eq:Birkhoff N_x+Z_x} follow from Birkhoff's
ergodic theorem once we have shown the ergodicity of $(N_x)_{x \in \Z}$ and $(Z_x)_{x \in \Z}$
and have calculated $\E[N_0]$ and $\E[Z_0]$.
It is worth mentioning that, using truncation techniques, Birkhoff's ergodic theorem also applies in the case $\E[w_0]=\infty$.
Regarding $\E[N_0]$, notice that the number of returns to the origin of $(Y_n^{-\infty})_{n \in \N_0}$
is geometric with success probability
being the escape probability $\pesc$. Consequently,
\begin{align*}
\E[N_0] = \sum_{n=1}^\infty \Prob(N_0 \geq n) = \sum_{n=1}^\infty (1-\pesc)^{n-1}
= \frac1\pesc
= \frac{e^\lambda+e^{-\lambda}}{e^\lambda-e^{-\lambda}}.
\end{align*}
Further, since $N_x$ and $(\tau_{w_x,x,k})_{k \in \N}$ are independent,
by Wald's identity,
\begin{align*}
\E[Z_0] = \E[N_0] \E[\tau_{w_0,0,1}] = \E[N_0] \E[w_0].
\end{align*}
It remains to prove the ergodicity of $(N_x)_{x \in \Z}$ and $(Z_x)_{x \in \Z}$.
To this end, first notice that $N_0 = g(\bU_{\!0},\bU_{\!1},\ldots;\bU_{\!-1},\bU_{\!-2},\ldots)$ for some Borel measurable function
$g: ((0,1)^\N)^\Z \to \R$.
Then, since $N_x = g(\bU_{\!x},\bU_{\!x+1},\ldots;\bU_{\!x-1},\bU_{\!x-2},\ldots)$
and since the family $(\bU_{\!y})_{y \in \Z}$ is ergodic,
so is $(N_x)_{x \in \Z}$ again by Lemma 5.6(c) of \cite{A-F+H:2009}.
Regarding the ergodicity of $(Z_x)_{x \in \Z}$, define 
\begin{equation*}
h: \N \times \R \times D^\N \to \R, \quad (n,r,((x_{s,k})_{s \geq 0})_{k \in \N}) \mapsto \sum_{j=1}^n x_{r,j},
\end{equation*}
then $h$ is product-measurable. This is true since the projections are measurable with respect to the Borel-$\sigma$-field $\mathcal{D}$
on the Skorohod space $D$ equipped with the Skorohod topology, see \cite[Theorem 16.6]{Billingsley:1999}.
By the right-continuity of the elements of the Skorohod space,
also $\R^+ \times D \ni (t,x) \mapsto x(t) \in \R$ is $(\B(\R^+) \otimes \mathcal{D}$)-$\B(\R)$-measurable and using this,
the product-measurability follows from standard arguments.
Since we have
\begin{equation*}
Z_x = h(g(\bU_{\!x},\bU_{\!x+1},\ldots;\bU_{\!x-1},\bU_{\!x-2},\ldots), w_x, ((\tau_{s,x,k})_{s \geq 0})_{k \in \N}),
\end{equation*}
the ergodicity of $(Z_x)_{x \in \Z}$ follows as above from the ergodicity of $(\bU_x, w_x, ((\tau_{r,x,k})_{r \geq 0})_{k \in \N})_{x \in \Z}$
and again Lemma 5.6(c) of \cite{A-F+H:2009}.
\end{proof}

\begin{proof}[Proof of Theorem \ref{Thm:speed explicit}]
For $n \in \N_0$, let $\varrho_n \defeq \inf\{t \geq 0: X_t = R_n\}$ and,
similarly, $\varrho_n^{-\infty} \defeq \inf\{t \geq 0: X_t^{-\infty} = R_n\}$.
Once the random walks hit $R_0$, the first regeneration point on the nonnegative axis,
the trajectories of $(X_t)_{t \geq 0}$ and $(X_t^{-\infty})_{t \geq 0}$ coincide, i.e.,
\begin{equation*}
(X_{\varrho_0+t})_{t \geq 0} = (X^{-\infty}_{\varrho^{-\infty}_0+t})_{t \geq 0}.
\end{equation*}
(Notice that this is not the case with $\varrho_0$ and $\varrho^{-\infty}_0$ replaced by $0$.)
Let $\Delta \defeq \varrho_0 - \varrho_0^{-\infty}$.
Then, for $t > \varrho_0 \vee \varrho^{-\infty}_0$,
\begin{align*}
\frac{X_t}{t} = \frac{X_{\varrho_0+t-\varrho_0}}{t} = \frac{X_{\varrho_0^{-\infty}+t-\varrho_0}^{-\infty}}{t}
= \frac{X_{t-\Delta}^{-\infty}}{t-\Delta} \frac{t-\Delta}{t}.
\end{align*}
Therefore, it suffices to prove that
\begin{equation}	\label{eq:lim X_t^-infty/t}
\lim_{t \to \infty} \frac{X_t^{-\infty}}{t} = \frac{1}{\E[N_0] \E[w_0]}	\quad	\text{almost surely}.
\end{equation}
Since the time the continuous-time random walk $(X_t^{-\infty})_{t \geq 0}$ spends on the negative half-axis is finite by transience
and since $R_n \to \infty$ almost surely as $n \to \infty$,
we conclude that
\begin{align}	\label{eq:limrho_n/R_n}
\lim_{n \to \infty} \frac{\varrho_n^{-\infty}}{R_n}
= \lim_{n \to \infty} \frac{1}{R_n} \sum_{x=0}^{R_n-1} Z_x
= \E[Z_0] = \E[N_0] \E[w_0]	\quad	\text{almost surely.}
\end{align}
This may be rewritten as 
\begin{align}	\label{eq:limR_n/rho_n}
\lim_{n \to \infty} \frac{1}{\varrho_n^{-\infty}} X^{-\infty}_{\varrho_n^{-\infty}} = \lim_{n \to \infty} \frac{R_n}{\varrho_n^{-\infty}}
= \frac{1}{\E[N_0] \E[w_0]}	\quad	\text{almost surely}
\end{align}
and gives the desired convergence \eqref{eq:lim X_t^-infty/t}, but only as $t \to \infty$
along the regeneration times $\varrho_1^{-\infty},\varrho_2^{-\infty},\ldots$.
This convergence along a subsequence can be lifted to \eqref{eq:lim X_t^-infty/t}
by a standard sandwich argument. For the reader's convenience,
we give this argument.
For $t > 0$, let $t_+ \defeq \inf\{\varrho_n^{-\infty}: n \in \N_0, R_n> X_t\}$ and $t_- \defeq \max\{\varrho_n^{-\infty}: n \in \N_0, R_n \leq X_t\}$
where the maximum of the empty set is defined to be $0$.
From \eqref{eq:Birkhoff I_x}, we infer
\begin{align}	\label{eq:limR_n/n}
\lim_{n \to \infty} \frac{R_n}n = \lim_{n \to \infty} \bigg(\frac1{R_n} \sum_{x=0}^{R_n-1} I_x \bigg)^{-1} = \frac1{r_\lambda}
\quad	\text{almost surely}.
\end{align}
in particular, $R_{n+1}/R_n \to 1$ almost surely.

We distinguish two cases, namely, $\E[w_0]<\infty$ and $\E[w_0]=\infty$.
First suppose that $\E[w_0]=\infty$.
Then, for $t > \varrho_0^{-\infty}$, we have
\begin{align}	\label{eq:sandwich}
\frac{X_t^{-\infty}}t  = \frac{X_{t}^{-\infty}}{X_{t_-}^{-\infty}} \frac{X_{t_-}^{-\infty}}{t_-} \frac{t_-}{t}.
\end{align}
The first factor tends to $1$ as $t \to \infty$ almost surely since $R_{n+1}/R_n \to 1$ almost surely.
Further, $t_-/t$ is bounded by one whereas $X_{t_-}^{-\infty}/t_- \to 0$ almost surely by \eqref{eq:limR_n/rho_n},
so \eqref{eq:lim X_t^-infty/t} holds.

Now suppose $\E[w_0]<\infty$.
Then, for $t > \varrho_0^{-\infty}$, we have
\begin{align}	\label{eq:sandwich}
\frac{X_{t_-}^{-\infty}}{t_-} \frac{t_-}{t_+}
= \frac{X_{t_-}^{-\infty}}{t_+} \leq \frac{X_t^{-\infty}}t  \leq \frac{X_{t_+}^{-\infty}}{t_-} = \frac{X_{t_+}^{-\infty}}{t_+} \frac{t_+}{t_-}.
\end{align}
Here, $\lim_{t \to \infty} {X_{t_-}^{-\infty}}/{t_-} = \lim_{t \to \infty} {X_{t_+}^{-\infty}}/{t_+} = {1}/(\E[N_0] \E[w_0])$
almost surely by \eqref{eq:limrho_n/R_n}.
It thus remains to show that $\lim_{t \to \infty} t_+/t_- = 1$ almost surely or, equivalently,
$\lim_{n \to \infty} \varrho^{-\infty}_{n+1}/\varrho_n^{-\infty} = 1$ almost surely.
The latter, however, follows from \eqref{eq:limrho_n/R_n} in combination with $\E[w_0] < \infty$ and $R_{n+1}/R_n \to 1$ almost surely.
\end{proof}


%
\bibliographystyle{abbrv}

\bibliography{Bouchaud} 


\end{document}